\newtheorem{thm}{Theorem}[section]
\newtheorem{cor}[thm]{Corollary}
\newtheorem{lem}[thm]{Lemma}
\newtheorem{prop}[thm]{Proposition}
\theoremstyle{definition}
\theoremstyle{remark}
\newtheorem{rem}[thm]{Remark}
\numberwithin{equation}{section}
\numberwithin{figure}{section}
\newcommand*{\dottt}[1]{%
   \accentset{\mbox{\large\bfseries .}}{#1}}
\newcommand{\sm}{\setminus}
\newcommand{\diff}{\mathrm{d}}
\newcommand{\C}{{\mathbb C}}
\newcommand{\R}{{\mathbb R}}
\newcommand{\dA}{{\diff A}}
\newcommand{\ds}{\diff s}
\newcommand{\normv}{\mathrm{n}}
\newcommand{\imag}{\mathrm{i}}
\newcommand{\Pop}{{\mathbf P}}
\newcommand{\Qop}{{\mathbf Q}}
\newcommand{\Gop}{{\mathbf G}}
\newcommand{\Ex}{\mathbb{E}}
\newcommand{\abs}[1]{\lvert#1\rvert}
\newcommand{\norm}[1]{\lVert#1\rVert}
\newcommand{\ip}[1]{\langle#1\rangle}
\newcommand{\cinterv}[1]{\mathopen[{#1}\mathclose]}
\DeclareMathOperator{\cl}{cl}
\begin{document}

%
\title{The Gaussian free field and Hadamard's variational formula}

\author{Haakan~Hedenmalm}
\address{Hedenmalm: Department of Mathematics\\
KTH Royal Institute of Technology\\
S--10044 Stockholm\\
Sweden}

\email{haakanh@math.kth.se}

\author{Pekka~J.~Nieminen}

\address{Nieminen: Department of Mathematics and Statistics\\
University of Helsinki\\
Box 68\\
FI--00014 Helsinki\\
Finland}

\email{pjniemin@cc.helsinki.fi}

\date{8 February 2012}

\subjclass[2010]{60K35, 30C70}

\thanks{Both authors were supported by the G\"oran Gustafsson
Foundation (KVA) and Vetenskapsr\r{a}det (VR). The second author was
in addition supported by the Academy of Finland, project 136785.}

\begin{abstract} 
We relate the Gaussian free field on a planar domain to the
variational formula of Hadamard which explains the change of the Green
function under a perturbation of the domain.
This is accomplished by means of a natural integral operator -- 
called the Hadamard operator -- associated with a given flow of
growing domains.
\end{abstract}

\maketitle

\section{Introduction} 

The Gaussian free field ($\mathrm{GFF}_0$) on a planar domain
$\Omega$ is an analogue of the classical Brownian motion with two temporal
dimensions, which frequently appears as the continuum limit of discrete random
processes and plays an important role in statistical physics (see \cite{Sh} 
for a general survey). It can be defined, at least formally, as a random
linear combination of an orthonormal basis $\psi_j$ ($j=1,2,3,\ldots$) of the
Sobolev space $H^1_0(\Omega)$,
\[
   \Psi = \sum_{j=1}^{+\infty} \xi_j\psi_j,
\]
where $\xi_j$ ($j=1,2,3,\ldots$) is a sequence of independent standard 
real-valued  Gaussian variables (i.e., taken from $\mathrm{N}(0,1)$). 
The space $H^1_0(\Omega)$ consists of real-valued functions and is
equipped with the usual Dirichlet inner product.
The resulting stochastic field $\Psi$ is conformally invariant with covariance
structure determined by the Green function of the domain; thus
the $\mathrm{GFF}_0$ also exhibits interesting connections to
complex analysis and potential theory.

The purpose of this note is to relate the Gaussian free field to the
classical variational formula due to Hadamard \cite{H} which describes
the change of the Green function under a perturbation of the boundary
of the underlying domain. We interpret Hadamard's formula in
terms of a natural and concrete integral operator -- called by us the
\emph{Hadamard operator} --
associated with a given flow of smooth domains. It supplies an isometric
isomorphism $L^2(\Omega)\to H^1_0(\Omega)$, and, consequently, it can be 
used to produce the corresponding $\mathrm{GFF}_0$ field from the standard 
white noise field. This
construction has certain appealing features since it enables us to
view $\mathrm{GFF}_0$ as formed by adding independent
infinitesimal harmonic fields whose boundary amplitudes correspond
to suitably weighted white noise processes.

The paper is organized as follows. In Section~\ref{sec:Pre}, we
recall the definition of the space $H_0^1(\Omega)$ and some
related prerequisites. In Section~\ref{sec:Had}, we first describe
a version of Hadamard's variational formula, and then proceed
to introduce the associated Hadamard operator and analyse its
properties. Section~\ref{sec:Random} contains a brief account
on the white noise and Gaussian free fields from the point of
view of Gaussian Hilbert spaces. Finally, in
Section~\ref{sec:GFF}, we use the Hadamard operator to construct
the Gaussian free field and derive some natural consequences.

\section{Preliminaries}
\label{sec:Pre}

In this section, $\Omega$ is a finitely connected bounded domain in the 
complex plane $\C$ with a $C^2$-smooth boundary. We use the notation
\[
\Delta:=\frac{\partial^2}{\partial x^2} + \frac{\partial^2}{\partial y^2},
\qquad
\nabla:=\biggl(\frac{\partial}{\partial x},\frac{\partial}{\partial y}\biggr),
\qquad
\dA(z):=\diff x\diff y,
\]
for the Laplacian, the gradient (nabla) operator, and the area element, 
respectively; here, $z=x+\imag y$ is the decomposition of $z\in\C$ into real 
and imaginary parts.

\subsection{The Green function and Poisson kernel}
\label{sec:Green}
The Dirichlet Green function $G_\Omega$ in $\Omega$ solves, for $w\in\Omega$,
\[
\begin{cases}
   -\Delta_z G_\Omega(z,w)=\delta_w(z), & \quad z\in\Omega, \\
   G_\Omega(z,w) = 0,  & \quad z\in\partial\Omega,
\end{cases}
\]
where $\Delta_z$ indicates that the Laplacian is taken with respect to $z$.
It has the symmetry property $G_\Omega(z,w)=G_\Omega(w,z)$.
We write $\Gop_\Omega$ for the associated integral operator
\[
   \Gop_\Omega f(z):=\int_\Omega G_\Omega(z,w)f(w)\,\dA(w),
   \qquad z\in\Omega,
\]
extended to vanish in $\C\sm\Omega$. Then, in the sense
of distributions, $-\Delta \Gop_\Omega f = f$ in $\Omega$, so we may
write $\Gop_\Omega = [-\Delta]^{-1}$.

The Poisson kernel $P_\Omega$ is obtained from 
\[
   P_\Omega(z,\zeta) :=
   \frac{\partial}{\partial\normv(\zeta)}G_\Omega(z,\zeta),
   \qquad z\in\Omega,\ \zeta\in\partial\Omega,
\] 
where the unit normal derivative is taken in the inward direction. For 
$\zeta\in\partial\Omega$, we extend $P_\Omega(z,\zeta)$ to (area-a.e.)
$z\in\C$ 
by setting it equal to $0$ in $\C\sm\cl[\Omega]$, where $\cl[\Omega]$
denotes the closure of $\Omega$. The 
associated integral operator is
\[
   \Pop_\Omega f(z) :=
   \int_{\partial\Omega} P_\Omega(z,\zeta)f(\zeta)\,\ds(\zeta),
   \qquad z\in\Omega,
\]
where $\ds$ refers to the arc-length measure. It furnishes the
harmonic extension of $f$ defined on $\partial\Omega$ to the
interior $\Omega$.  

\subsection{The space $H^1_0(\Omega)$}
\label{sec:Sobolev}
The Sobolev space $H^1_0(\Omega)$, alternatively denoted
by $W^{1,2}_0(\Omega)$, can be defined as the Hilbert space completion
of (real-valued) $C_0^\infty(\Omega)$ under the Dirichlet inner
product 
\[
   \ip{f,g}_{\Omega,\nabla} := \int_\Omega \nabla f\cdot\nabla g\,\dA,
\]
where the dot is the inner product in $\R^2$.
The norm of the space is given by $\norm{f}_{\Omega,\nabla} :=
\ip{f,f}_{\Omega,\nabla}^{1/2}$.
As all functions $f,g \in H^1_0(\Omega)$ vanish on the
boundary, we may write, using integration by parts,
\begin{equation} \label{eq:H1ip}
   \ip{f,g}_{\Omega,\nabla} = \ip{-\Delta f,g}_\Omega
   = \ip{[-\Delta]^{1/2}f,[-\Delta]^{1/2}g}_\Omega,
\end{equation}
where $\ip{\cdot,\cdot}_\Omega$ is the usual inner product in
$L^2(\Omega)$. It follows that the operator
$[-\Delta]^{1/2}$ supplies an isometric isomorphism from
$H_0^1(\Omega)$ onto $L^2(\Omega)$, and its inverse is given
by $[-\Delta]^{-1/2}$. As is standard, the powers of
$-\Delta$ are understood in terms of eigenfunction expansions
with vanishing Dirichlet boundary data (cf., e.g.,
\cite[Sec.~6.5.1]{Evans}).

\section{Hadamard's formula and an associated operator}
\label{sec:Had}

\subsection{Hadamard's variational formula} \label{sec:HVar}
We consider a family of bounded planar domains $\Omega(t)$
($0 < t \leq 1$) such that for some positive integer $N$: 
\begin{enumerate}
\item[(a)]
$\cl[\Omega(t)] \subset \Omega(t')$ for
$0 < t < t' \le 1$.
\item[(b)]
The complement $\C\sm\Omega(t)$ consists of $N$ connectivity components
for all $t$.
\item[(c)]
The boundaries $\partial\Omega(t)$ are all $C^2$-smooth and vary in a
$C^2$-smooth fashion with $t$.
\item[(d)]
The intersection $\bigcap_t\Omega(t)$ -- what might be called \emph{the
skeleton} -- is a continuum which is either a point or the union of
a finite number of $C^2$-smooth curves, and, in particular, has zero
area.
\end{enumerate}
To simplify the notation, we write $G_t$ and $P_t$ for
the Green function and the Poisson kernel of $\Omega(t)$,
and $\Gop_t$ and $\Pop_t$ for the associated operators, respectively.

In the setting of (a)--(d), the classical variational formula of
Hadamard~\cite{H} (cf.\ also \cite{Neh,Sch}) tells us how
the Green function $G_t$ varies with $t$. We prefer to state the
expression in integral form:
\begin{equation}
   G_t(z,w)=\int_0^t\int_{\partial\Omega(\tau)}P_\tau(z,\zeta)
   P_\tau(w,\zeta)\varrho(\zeta)\,\ds(\zeta)\,\diff\tau,
\label{eq:H1}
\end{equation}
where $\varrho(\zeta)$ is the rate at which the boundary 
$\partial\Omega(\tau)$ moves at $\zeta\in\partial\Omega(\tau)$ along
the direction of the exterior unit normal vector as $\tau$ grows. Note
that the index $\tau$ is uniquely determined by the condition
$\zeta\in\partial\Omega(\tau)$. Also recall that the Poisson kernel is
extended to equal $0$ outside
the closure of the domain where it was originally defined.

For our purposes it is important to note that, for each $t$, the
boundaries $\partial\Omega(\tau)$ ($0 < \tau < t$) essentially foliate
the entire domain $\Omega(t)$ in the sense that their union (which is
disjoint by assumption (a)) equals $\Omega(t)$
except for the skeleton. This makes it possible to compute area
integrals over $\Omega(t)$ using the expression
\begin{equation} 
\label{eq:polcoord}
   \int_{\Omega(t)} f \,\dA =
   \int_0^t \biggl\{ \int_{\partial\Omega(\tau)}
   f\varrho\,\ds \biggr\} \,\diff\tau,
\end{equation}
reminiscent of polar coordinates. It follows from \eqref{eq:polcoord}
that $\varrho > 0$ almost everywhere on $\Omega(1)$.

\begin{rem}
An explicit description of a setup of the above kind and a proof of
Hadamard's formula in that context can be found in Schippers~\cite{Sch}.
He assumes that the boundaries $\partial\Omega(t)$ are furnished
by a collection of injective and non-overlapping $C^2$-smooth
homotopies $F_j(t,\theta)$ ($j=1,\ldots,N$) where
$\theta \in \cinterv{0,2\pi}$ (with endpoints identified).
Then, at each point $\zeta = F_j(t,\theta)$, a routine computation
shows that
$\varrho(\zeta) = \abs{\det \mathrm{D}F_j(t,\theta)}/
\abs{\partial F_j(t,\theta)/\partial\theta}$ with
$\mathrm{D}F_j$ denoting the Jacobian matrix. The expression
\eqref{eq:polcoord} now follows by usual change of variables using
the maps $F_j$.
\end{rem}

\subsection{The Hadamard operator}
We keep the setting of (a)--(d) above, and for each
$\zeta \in \Omega(1)$, except the points of the skeleton,
let $\tau(\zeta)$ be the unique value of $t$ 
such that $\zeta\in\partial\Omega(t)$. In words, $\tau(\zeta)$ is the 
time when the front $\partial\Omega(t)$ passes the point $\zeta$. 
For each $0 < t \leq 1$, we introduce the {\em Hadamard operator}
$\Qop_t$ by
\[
   \Qop_t f(z) := \int_{\Omega(t)}P_{\tau(\zeta)}(z,\zeta)
   f(\zeta)\,\dA(\zeta),
\]
whenever the integral makes sense. In terms of signals, $\Qop_t$ maps
the unit point mass at $\zeta\in\Omega(t)$ to the Poisson kernel
function 
$z\mapsto P_{\tau(\zeta)}(z,\zeta)$, treated as zero for 
$z\in\C\sm\Omega(\tau(\zeta))$. Note that $\Qop_t f$ is always
supported in $\cl[\Omega(t)]$ and $\Qop_t f = \Qop_1(1_{\Omega(t)} f)$
where $1_{\Omega(t)}$ stands for the characteristic function of
$\Omega(t)$.
Similarly, we define the {\em adjoint Hadamard operator}
\[
   \Qop_t^* f(\zeta) := 1_{\Omega(t)}(\zeta)
   \int_{\Omega(t)}P_{\tau(\zeta)}(z,\zeta) f(z)\,\dA(z),
\]
whenever the integral makes sense.

We note the linear appearance of the
Poisson kernel in the definition of the Hadamard operator, while it appears 
in a bilinear fashion in Hadamard's formula \eqref{eq:H1}. These two
objects are intimately related; in fact, as we will see in 
Section~\ref{sec:GFF}, Hadamard's formula corresponds to the calculation
of correlations for the stochastic field generated by the Hadamard
operator.

We collect the main properties of $\Qop_t$ and $\Qop_t^*$ in the
following proposition. We denote by $\Delta_t$ the Laplacian
on $\Omega(t)$, and recall that the powers of $-\Delta_t$ are to be
understood with respect to vanishing Dirichlet boundary data.

\begin{prop} \label{prop:uni}
The operators $[-\Delta_t]^{1/2}\Qop_t$ and
$\Qop_t^*[-\Delta_t]^{1/2}$ are both unitary on $L^2(\Omega(t))$.
Moreover, $\Qop_t \Qop_{t'}^* = [-\Delta_{t\wedge t'}]^{-1}$
for every $t$ and $t'$, where $t \wedge t'$ denotes the minimum of
$t$ and~$t'$.
\end{prop}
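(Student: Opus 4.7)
The plan is to first translate Hadamard's formula~\eqref{eq:H1} into an area integral via the polar-coordinate identity~\eqref{eq:polcoord}, yielding
\[
   G_t(z,w)=\int_{\Omega(t)} P_{\tau(\zeta)}(z,\zeta)\,P_{\tau(\zeta)}(w,\zeta)\,\dA(\zeta).
\]
For $t\leq t'$, Fubini then expresses $\Qop_t\Qop_{t'}^*f(z)$ as $\int_{\Omega(t')}f(w)\bigl\{\int_{\Omega(t)}P_{\tau(\zeta)}(z,\zeta)P_{\tau(\zeta)}(w,\zeta)\,\dA(\zeta)\bigr\}\dA(w)$, and the inner integral equals $G_t(z,w)$, vanishing for $w\notin\Omega(t)$ (since then $P_{\tau(\zeta)}(w,\zeta)=0$). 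Thus $\Qop_t\Qop_{t'}^*=\Gop_t=[-\Delta_{t\wedge t'}]^{-1}$, and the reversed case follows by taking adjoints and using the self-adjointness of $\Gop_{t\wedge t'}$.

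Specializing $t'=t$ gives $\Qop_t\Qop_t^*=[-\Delta_t]^{-1}$, so $T:=[-\Delta_t]^{1/2}\Qop_t$ satisfies $TT^*=I$: it is a coisometry on $L^2(\Omega(t))$, and its adjoint $T^*=\Qop_t^*[-\Delta_t]^{1/2}$ is an isometry. For the full unitarity claim, it then suffices to show that $T$ itself is an isometry, i.e.\ that
\[
   \norm{\Qop_t f}_{\Omega(t),\nabla}^2=\norm{f}_{L^2(\Omega(t))}^2
   \qquad\text{for every }f\in L^2(\Omega(t)).
\]

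To establish this last identity I would rewrite $\Qop_t f(z)=\int_{\tau(z)}^{t}\Pop_\tau(f\varrho)(z)\,\diff\tau$ (the integrand being $0$ for $\tau<\tau(z)$) and differentiate carefully. Writing $\nu(z)$ for the outward unit normal to $\partial\Omega(\tau(z))$ at $z$, one has $\nabla\tau(z)=\nu(z)/\varrho(z)$, and the boundary value $\Pop_{\tau(z)}(f\varrho)(z)=f(z)\varrho(z)$, so Leibniz yields
\[
   \nabla\Qop_t f(z)=-f(z)\nu(z)+\int_{\tau(z)}^{t}\nabla\Pop_\tau(f\varrho)(z)\,\diff\tau.
\]
Forming the dot product with the analogous expression for $g$ and integrating over $\Omega(t)$ via~\eqref{eq:polcoord}, the diagonal term gives $\int_{\Omega(t)}fg\,\dA$, while the two cross terms and the remaining double integral each reduce, via Green's first identity in the subdomain $\Omega(\tau\wedge\tau')$ in which the Poisson extension carrying the larger parameter is harmonic, to boundary integrals over $\partial\Omega(\tau\wedge\tau')$. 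A careful bookkeeping, splitting the double integral according to $\tau\lessgtr\tau'$, shows that these three nondiagonal contributions cancel exactly, giving $\ip{\Qop_t f,\Qop_t g}_{\Omega(t),\nabla}=\ip{f,g}_{L^2(\Omega(t))}$, which is the missing isometry and hence the unitarity assertion.

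The main obstacle is precisely this cancellation step. One must justify the Leibniz differentiation across the moving front $\partial\Omega(\tau(z))$, verify the \emph{a priori} regularity $\Qop_t f\in H^1_0(\Omega(t))$, and keep track of which Poisson extension is harmonic in which subdomain when invoking Green's identity. Once these points are handled, the cancellation produces the isometry of $T$, which combined with the coisometry $TT^*=I$ from the first step completes the proof.
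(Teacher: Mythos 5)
Your first half coincides exactly with the paper's argument: the same polar-coordinates-plus-Fubini derivation of $\Qop_t\Qop_{t'}^*=\Gop_{t\wedge t'}=[-\Delta_{t\wedge t'}]^{-1}$, followed by the observation that $T^*=\Qop_t^*[-\Delta_t]^{1/2}$ is then an isometry, i.e.\ $TT^*=I$ for $T=[-\Delta_t]^{1/2}\Qop_t$. You diverge at the final step. You propose to prove $T^*T=I$ head-on, via the representation $\Qop_t f(z)=\int_{\tau(z)}^t\Pop_\tau(f\varrho)(z)\,\diff\tau$, the Leibniz formula for $\nabla\Qop_t f$, and a Green-identity cancellation; and indeed the bookkeeping works as you say: writing the double term as $\int_0^t\int_0^t\bigl\{\int_{\Omega(\tau\wedge\tau')}\nabla\Pop_\tau(f\varrho)\cdot\nabla\Pop_{\tau'}(g\varrho)\,\dA\bigr\}\,\diff\tau\,\diff\tau'$ and applying Green's first identity on $\Omega(\tau\wedge\tau')$ (the extension with the smaller index supplying the boundary data $f\varrho$ or $g\varrho$, the other factor being harmonic there), the halves $\{\tau<\tau'\}$ and $\{\tau>\tau'\}$ cancel the two cross terms exactly, leaving $\ip{f,g}_{\Omega(t)}$. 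The paper avoids this computation entirely with a softer argument: since $T$ is a coisometry with dense range, unitarity of both operators follows once $T$ is \emph{injective}, and injectivity is cheap -- if $\Qop_t f=0$, Lemma~\ref{le:Qop} forces $\Qop_\tau f=0$ for every $0<\tau<t$ (the harmonic truncation of the zero function to $\Omega(\tau)$ is again zero), and differentiating $\Qop_\tau f=0$ in $\tau$ gives $\Pop_\tau(f\varrho)=0$ on $\Omega(\tau)$ for a.e.\ $\tau$, whence $f\varrho=0$ on a.e.\ front and $f=0$ since $\varrho>0$ a.e. Note that the paper's $\tau$-differentiation involves no gradient and no moving-front Leibniz term, so the regularity issues you flag simply never arise. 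Your route is correct in outline but analytically heavier than you indicate: for general $f\in L^2(\Omega(t))$ the extension $\Pop_\tau(f\varrho)$ has merely $L^2$ boundary data and in general infinite Dirichlet energy, so your term-by-term expansion is not absolutely convergent as written; you would need to run the computation for a dense class of smooth $f,g$ and extend by density, using $\Qop_t\Qop_t^*=\Gop_t$ (already in hand) to identify the isometric extension with the integral operator $\Qop_t$. Once that reduction is made, your argument yields $T^*T=I$ constructively -- in effect re-deriving the bilinear structure of Hadamard's formula from the linear Leibniz structure -- whereas the paper's injectivity argument buys brevity and minimal regularity demands at the cost of being non-constructive about the isometry.
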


In view of identity \eqref{eq:H1ip}, the proposition can be rephrased
as saying that $\Qop_t$ is an isometric isomorphism
$L^2(\Omega(t)) \to H^1_0(\Omega(t))$ and $\Qop_t^*$ is an isometric
isomorphism $H^{-1}(\Omega(t)) \to L^2(\Omega(t))$, where
$H^{-1}(\Omega(t))$ is the dual of $H^1_0(\Omega(t))$ under the duality
pairing induced by the $L^2$ inner product.

The following observation, which will be needed in the proof of
Proposition~\ref{prop:uni},
is helpful in understanding the evolution of the operator $\Qop_t$
as the domain $\Omega(t)$ varies.

\begin{lem} \label{le:Qop}
For $0 < t < t' \le 1$ and $f\in L^2(\Omega(t'))$, the 
function 
\[
   \Qop_{t'} f(z)-\Qop_t f(z)
   = \int_{\Omega(t')\sm\Omega(t)}
     P_{\tau(\zeta)}(z,\zeta) f(\zeta)\,\dA(\zeta)
\]
equals $\Qop_{t'} f(z)$ on $\Omega(t')\sm\Omega(t)$, and on
$\Omega(t)$ it is harmonic and its boundary values on $\partial\Omega(t)$ 
are those of $\Qop_{t'} f$.
\end{lem}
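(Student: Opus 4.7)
The plan is to take the decomposition as essentially an exercise in splitting a domain of integration and verifying two local statements, one on each piece.

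First I would observe that the equality displayed in the lemma is immediate: since $\Omega(t)\subset\Omega(t')$ by monotonicity~(a), linearity of the defining integral for $\Qop_{t'}f$ gives
\[
   \Qop_{t'}f(z)-\Qop_tf(z)
   =\int_{\Omega(t')\sm\Omega(t)}P_{\tau(\zeta)}(z,\zeta)f(\zeta)\,\dA(\zeta),
\]
and this step needs no further comment.

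Next I would handle the two regions separately. For $z\in\Omega(t')\sm\cl[\Omega(t)]$, the remark that $\Qop_tf$ is supported in $\cl[\Omega(t)]$ forces $\Qop_tf(z)=0$, so the right-hand side is literally $\Qop_{t'}f(z)$ there. For $z\in\Omega(t)$, the key point is that for $\zeta\in\Omega(t')\sm\cl[\Omega(t)]$ one has $\tau(\zeta)>t$ (because $\zeta\notin\Omega(t)$, while $\zeta\in\Omega(\tau(\zeta))$ would fail at $\tau(\zeta)\le t$), hence $\cl[\Omega(t)]\subset\Omega(\tau(\zeta))$. The Poisson kernel $z\mapsto P_{\tau(\zeta)}(z,\zeta)$ is therefore harmonic on a neighbourhood of $\cl[\Omega(t)]$, with uniform estimates away from $\partial\Omega(\tau(\zeta))$ as $\zeta$ stays in a compact subset. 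This allows one to differentiate under the integral sign (by dominated convergence, using that $f\in L^2$ and $P_{\tau(\zeta)}(\cdot,\zeta)$ is locally bounded together with its $z$-derivatives on $\Omega(t)$) and conclude $\Delta_z\bigl(\Qop_{t'}f-\Qop_tf\bigr)(z)=0$ for $z\in\Omega(t)$.

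Finally, for the boundary values I would argue that $\Qop_tf$ extends continuously to zero on $\partial\Omega(t)$: in the defining integral of $\Qop_tf(z)$, $\tau(\zeta)\le t$, so the extension convention forces $P_{\tau(\zeta)}(z,\zeta)=0$ whenever $z\in\partial\Omega(t)$ and $\tau(\zeta)<t$, while on the set $\{\tau(\zeta)=t\}$ (area-negligible, being contained in the $C^2$-curve $\partial\Omega(t)$) the contribution vanishes. Taking $z\to z_0\in\partial\Omega(t)$ from inside $\Omega(t)$, dominated convergence yields $\Qop_tf(z)\to 0$; therefore $\Qop_{t'}f-\Qop_tf$ and $\Qop_{t'}f$ share the same boundary values on $\partial\Omega(t)$. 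I expect the main technical obstacle to be justifying the interchange of $\Delta_z$ with the integral and the continuity of $\Qop_tf$ up to $\partial\Omega(t)$; both hinge on producing a locally uniform bound for $P_{\tau(\zeta)}(z,\zeta)$ as $(z,\zeta)$ varies near the boundary, which uses the $C^2$-smoothness of the foliation together with standard Poisson-kernel estimates.
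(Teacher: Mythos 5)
Your proposal is correct and takes essentially the same route as the paper's (much terser) proof, which rests on exactly the two facts you isolate: that $\Qop_t f$ vanishes on $\C\sm\Omega(t)$ (forced by the convention extending each Poisson kernel by zero outside $\cl[\Omega(\tau(\zeta))]$), and that $z\mapsto P_{\tau(\zeta)}(z,\zeta)$ is harmonic in $\Omega(\tau(\zeta))\supset\Omega(t)$ for $\zeta\in\Omega(t')\sm\Omega(t)$. The technical justifications you supply -- differentiating under the integral sign and the continuity of $\Qop_t f$ up to $\partial\Omega(t)$ -- go beyond what the paper records (it simply calls these facts ``easily verified''), and are the right points to worry about.
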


\begin{proof}
This follows from the easily verified fact that $\Qop_tf$ vanishes
in $\C\sm\Omega(t)$, together with the observation that 
$z\mapsto P_{\tau(\zeta)}(z,\zeta)$ is harmonic in 
$\Omega(\tau(\zeta))$.
\end{proof}

\begin{proof}[Proof of Proposition~\ref{prop:uni}]
We start by addressing the last assertion. A computation with a
test function $f \in C_0^\infty(\C)$ gives
\[ \begin{split}
   \Qop_t\Qop_{t'}^* f(z)
   &= \int_{\Omega(t')}
     \biggl\{ \int_{\Omega(t)} 1_{\Omega(t')}(\zeta)
     P_{\tau(\zeta)}(z,\zeta) P_{\tau(\zeta)}(w,\zeta)\,\dA(\zeta)
     \biggr\} \,f(w) \,\dA(w)  \\
   &= \int_{\Omega(t')}
     \biggl\{ \int_0^{t\wedge t'} \int_{\partial\Omega(\tau)}
     P_{\tau}(z,\zeta) P_{\tau}(w,\zeta)
     \varrho(\zeta) \,\ds(\zeta)\,\diff\tau \biggr\}
     \,f(w) \,\dA(w),
\end{split} \]
using Fubini's theorem and the ``polar coordinates'' trick
\eqref{eq:polcoord} with $\tau$ as the time variable. However, by
Hadamard's formula \eqref{eq:H1}, the integral in braces equals
$G_{t\wedge t'}(z,w)$. Consequently, $\Qop_t\Qop_{t'}^* f(z) =
\Gop_{t\wedge t'} f(z)$, which gives $\Qop_t\Qop_{t'}^* =\Gop_{t\wedge t'}$;
this is equivalent to the last assertion.

Turning to the operator $\Qop_t^*[-\Delta_t]^{1/2}$, we note that
for all $f,g \in C_0^\infty(\Omega(t))$ we get
\[
   \ip{\Qop_t^*[-\Delta_t]^{1/2}f,\Qop_t^*[-\Delta_t]^{1/2}g}_{\Omega(t)}
   = \ip{[-\Delta_t]^{1/2}\Qop_t\Qop_t^*[-\Delta_t]^{1/2}f,g}_{\Omega(t)}
   = \ip{f,g}_{\Omega(t)}
\]
by what we have already obtained.
Hence $\Qop_t^*[-\Delta_t]^{1/2}$ defines an isometry from
$L^2(\Omega(t))$ into itself, and its adjoint
$[-\Delta_t]^{1/2}\Qop_t$ is a contraction with dense range in
$L^2(\Omega(t))$. To show that both operators are unitary,
it suffices to verify that $[-\Delta_t]^{1/2}\Qop_t$ is
injective.

So suppose that $[-\Delta_t]^{1/2}\Qop_t f = 0$ for some
$f \in L^2(\Omega(t))$. Since $\Qop_t f$ belongs to $H^1_0(\Omega(t))$,
this implies that $\Qop_t f = 0$. Then if $0 < \tau < t$
and we replace $\Qop_t f$ with the function having the same values on 
$\partial\Omega(\tau)$ and being harmonic in $\Omega(\tau)$, we still
have the zero function. Thus, by Lemma~\ref{le:Qop}, we must have
$\Qop_\tau f=0$ for all $\tau$ with $0 < \tau < t$.
Then the derivative must also vanish (for a.e.\ $\tau$ with
$0 < \tau < t$):
\[
   \frac{\partial}{\partial\tau}\Qop_\tau f(z) =
   \int_{\partial \Omega(\tau)}
   P_\tau(z,\zeta)f(\zeta)\varrho(\zeta)\,\ds(\zeta) = 0,
   \qquad z\in\Omega(\tau).
\] 
But this is the harmonic extension of $f\varrho$, and it can
vanish everywhere only if $f\varrho=0$ holds a.e.\ on
$\partial\Omega(\tau)$.  
Since $\varrho > 0$ a.e., we conclude that $f=0$ a.e.
This proves that $[-\Delta_t]^{1/2}\Qop_t$ is injective.
\end{proof}

\section{Basic random fields}
\label{sec:Random}

We briefly recall the construction and some basic properties of the
white noise and Gaussian free fields. We primarily adopt the
viewpoint of Gaussian Hilbert spaces. See Janson's book \cite{Janson} for an
account of such spaces and Sheffield~\cite{Sh} for a general survey of
the Gaussian free field.

Throughout this section we assume that $\Omega$
is a bounded domain in the complex plane with a $C^2$-smooth
boundary.

\subsection{The white noise field}
The (real-valued) white noise field on $\Omega$ is given formally as
a random linear combination
\[
   \Phi = \sum_{j=1}^{+\infty}\xi_j\phi_j,
\]
where the $\xi_j$ ($j=1,2,\ldots$) are all independent standard Gaussian random
variables, and the functions $\phi_j$ ($j=1,2,\ldots$) form an orthonormal 
basis of $L^2(\Omega)$. At times, we may want to express
this as $\Phi\in \mathrm{WN}(\Omega)$.
The above series does not converge (almost surely) in $L^2(\Omega)$.
However, in terms of the bilinear form
$\ip{\cdot,\cdot}_\Omega$ (i.e., the $L^2$ inner product on $\Omega$),
it makes sense to consider
\[
   \ip{f,\Phi}_\Omega
   = \sum_{j=1}^{+\infty}\xi_j\ip{f,\phi_j}_\Omega
\]
since this series converges in mean square 
to a Gaussian variable with mean $0$ and variance  
$\norm{f}_{L^2(\Omega)}^2$ for each $f \in L^2(\Omega)$. Thus
we may view the white noise field as a \emph{Gaussian Hilbert space}
formed by the jointly Gaussian variables $\ip{f,\Phi}_\Omega$ -- one
for each $f \in L^2(\Omega)$ -- whose covariance structure is given by
\[
   \Ex \bigl[ \ip{f,\Phi}_\Omega \ip{g,\Phi}_\Omega \bigr]
   = \ip{f,g}_\Omega.
\]
Here, the symbol $\Ex$ stands for the expectation operation. 
Note, in particular, that the law for
the field is independent of the choice of basis -- this is somewhat
akin to the fact that the reproducing kernel of a Hilbert space of
functions, which may be written in terms of an orthonormal basis, is
independent of the choice of the basis (cf.\ \cite{Aron}).

Alternatively, $\Phi$ can be understood as a random element in a space
bigger than $L^2(\Omega)$. One way is to define $\Phi$ as
a random distribution in the Sobolev space $H^{-1-\epsilon}(\Omega)$
for some $\epsilon > 0$ such that its action on test
functions $f \in C_0^\infty(\Omega)$ yields random variables
$\ip{f,\Phi}_\Omega$ as described above. This is possible because
the natural embedding of $L^2(\Omega)$ into $H^{-1-\epsilon}(\Omega)$
is a Hilbert--Schmidt operator; see \cite[Sec.~2.2]{Sh}.

An important property of the white noise field $\Phi$ is the
independent action of the ``vibrations'' in different parts of
$\Omega$. For instance, 
if $\Omega=\Omega_1\cup\Omega_2\cup E$,  where the union is
mutually disjoint, and $\Omega_j$ is open for $j=1,2$, while
$E$ has zero area, then 
\[
   L^2(\Omega)= L^2(\Omega_1)\oplus L^2(\Omega_2)
\]
as orthogonal subspaces in a Hilbert space. As a result, 
$\Phi=\Phi_1+\Phi_2=\Phi_1\boxplus\Phi_2$, where $\Phi_1$ is the
field $\Phi$ conditioned to vanish on $\Omega_2$, and $\Phi_2$ is
the field $\Phi$ conditioned to vanish on $\Omega_1$. Here, the
symbol $\boxplus$ is used to indicate that the summands are independent
random fields. In a natural sense, $\Phi_j\in\mathrm{WN}(\Omega_j)$
for $j=1,2$. 

\subsection{The Gaussian free field}
The Gaussian free field on $\Omega$
with vanishing boundary data, which we will denote as
$\Psi \in\mathrm{GFF}_0(\Omega)$, can be defined in a way that is
analogous to the white noise field above. Instead of the space
$L^2(\Omega)$, we just work with the Sobolev space $H^1_0(\Omega)$
(see Section \ref{sec:Sobolev}). Then we obtain a Gaussian Hilbert space
formed by centred Gaussian variables $\ip{f,\Psi}_{\Omega,\nabla}$
for $f \in H^1_0(\Omega)$ with the property that
\[
   \Ex \bigl[ \ip{f,\Psi}_{\Omega,\nabla}
            \ip{g,\Psi}_{\Omega,\nabla} \bigr]
   = \ip{f,g}_{\Omega,\nabla}.
\]
Again, $\Psi$ does not (almost surely)
determine an element of $H^1_0(\Omega)$, yet it can be defined as
a random element in the Sobolev space $H^{-\epsilon}(\Omega)$
for any $\epsilon > 0$. In view of \eqref{eq:H1ip}, the Gaussian free
field may be retrieved from the white noise by
$\Psi = [-\Delta]^{-1/2}\Phi$ in the sense of distribution theory.
Moreover, the action of $\Psi$ on $L^2(\Omega)$ is given by
$\ip{f,\Psi}_\Omega = \ip{[-\Delta]^{-1}f,\Psi}_{\Omega,\nabla}$,
so that
\begin{equation} \label{eq:GFFL2}
   \Ex \bigl[ \ip{f,\Psi}_\Omega \ip{g,\Psi}_\Omega \bigr]
   = \ip{f,[-\Delta]^{-1}g}_\Omega
\end{equation}
for $f,g \in L^2(\Omega)$.
In fact, the definition of $\ip{f,\Psi}_\Omega$ can be extended
to all $f \in H^{-1}(\Omega)$ since $[-\Delta]^{-1}$ maps
$H^{-1}(\Omega)$ into $H^1_0(\Omega)$.

A main difference between the fields $\mathrm{GFF}_0(\Omega)$ and
$\mathrm{WN}(\Omega)$ is that while $\mathrm{WN}(\Omega)$ is purely
local, in $\mathrm{GFF}_0(\Omega)$ we have non-trivial long-range
correlations, due to the non-local nature of the operator
$[-\Delta]^{-1/2}$.

\section{The Gaussian free field via the Hadamard operator}
\label{sec:GFF}

Throughout this section, we work in the setting of the assumptions
(a)--(d) described in Section~\ref{sec:HVar}. For brevity, we write
$\Omega = \Omega(1)$ for the largest domain under consideration.

\subsection{Construction}
We start by invoking the Hadamard operator to produce the Gaussian
free field from the white noise field.
We agree to write $\Psi_0 = 0$.  

\begin{thm} 
\label{thm:GFF}
Let $\Phi \in \mathrm{WN}(\Omega)$. For $0 < t \le 1$, let
$\Psi_t := \Qop_t\Phi$, i.e.\
\[
   \ip{f,\Psi_t}_{\Omega} := \ip{\Qop_t^*f,\Phi}_{\Omega},
   \qquad f \in H^{-1}(\Omega).
\] 
Then $\Psi_t \in \mathrm{GFF}_0(\Omega(t))$. Moreover, the
process $\Psi_t$ $(0 \leq t \leq 1)$ has independent increments:\
for all $0 = t(0) < \cdots < t(n) = 1$ and
$f_1,\ldots,f_n \in H^{-1}(\Omega)$, the random variables
$\ip{f_j,\Psi_{t(j)}-\Psi_{t(j-1)}}_{\Omega}$ $(j=1,\ldots,n)$
are independent.
\end{thm}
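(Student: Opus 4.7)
The plan is to reduce everything to two established facts: the covariance structure of the white noise field, and the operator identities in Proposition~\ref{prop:uni}. The key identity $\Qop_t\Qop_{t'}^* = [-\Delta_{t\wedge t'}]^{-1}$ does essentially all the work.

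First I would check that $\Psi_t\in\mathrm{GFF}_0(\Omega(t))$. For $f\in H^{-1}(\Omega)$, the object $\Qop_t^* f$ is an $L^2$-function supported in $\Omega(t)$ (this is why the restriction to $H^{-1}(\Omega)$ in the theorem is natural: $\Qop_t^*$ only sees the restriction of $f$ to $\Omega(t)$, where Proposition~\ref{prop:uni} provides the isometry $H^{-1}(\Omega(t))\to L^2(\Omega(t))$). The pairing $\ip{\Qop_t^*f,\Phi}_\Omega$ is therefore a well-defined centred Gaussian variable, and its covariance structure is
\[
   \Ex\bigl[\ip{f,\Psi_t}_\Omega\ip{g,\Psi_t}_\Omega\bigr]
   = \ip{\Qop_t^*f,\Qop_t^*g}_\Omega
   = \ip{\Qop_t\Qop_t^*f,g}_\Omega
   = \ip{[-\Delta_t]^{-1}f,g}_\Omega,
\]
which matches \eqref{eq:GFFL2} applied on $\Omega(t)$ and hence identifies $\Psi_t$ as $\mathrm{GFF}_0(\Omega(t))$.

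For the independent increments, I would note that the whole family $\{\ip{f,\Psi_s-\Psi_t}_\Omega\}$ consists of white-noise linear functionals of the single field $\Phi$, so it forms a jointly Gaussian system. Independence is thus equivalent to pairwise vanishing covariance. For $j<k$ and test functions $f_j,f_k\in H^{-1}(\Omega)$, I would write
\[
   \ip{f_j,\Psi_{t(j)}-\Psi_{t(j-1)}}_\Omega
   = \ip{(\Qop_{t(j)}^*-\Qop_{t(j-1)}^*)f_j,\Phi}_\Omega,
\]
and similarly for the $k$-th increment. The covariance then expands into four terms of the form $\ip{\Qop_a\Qop_b^*f_k,f_j}_\Omega$ with $a\in\{t(j-1),t(j)\}$ and $b\in\{t(k-1),t(k)\}$. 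Since $j<k$ forces $a\le t(j)\le t(k-1)\le b$, we have $a\wedge b=a$ in each of the four combinations, and Proposition~\ref{prop:uni} collapses the expansion to $\Gop_{t(j)}-\Gop_{t(j)}-\Gop_{t(j-1)}+\Gop_{t(j-1)}=0$. Zero covariance together with joint Gaussianity yields the desired independence.

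The main obstacle I foresee is purely bookkeeping: one must make sure the pairings are interpreted on the correct function spaces when $f\in H^{-1}(\Omega)$ rather than $L^2$ or $C_0^\infty$, and verify that $\Qop_t^*f$ depends only on the restriction of $f$ to $\Omega(t)$ so that Proposition~\ref{prop:uni} applies directly. Once this framing is in place, both assertions drop out of the single operator identity $\Qop_t\Qop_{t'}^*=\Gop_{t\wedge t'}$ and the jointly Gaussian character of the construction.
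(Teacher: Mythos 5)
Your proposal is correct and follows essentially the same route as the paper: the covariance computation $\Ex[\ip{f,\Psi_t}_\Omega\ip{g,\Psi_{t'}}_\Omega] = \ip{f,\Qop_t\Qop_{t'}^*g}_\Omega$ combined with the identity $\Qop_t\Qop_{t'}^* = [-\Delta_{t\wedge t'}]^{-1}$ from Proposition~\ref{prop:uni}, with independence of increments deduced from vanishing covariance and joint Gaussianity. Your four-term telescoping expansion simply makes explicit the step the paper dismisses with ``it follows easily.''
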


\begin{proof}
For $0 < t,t' \leq 1$ and $f,g \in H^{-1}(\Omega)$, we have from
the definitions that
\[
   \Ex\bigl[ \ip{f,\Psi_t}_{\Omega}
             \ip{g,\Psi_{t'}}_{\Omega} \bigr]
   = \Ex\bigl[ \ip{\Qop_t^*f,\Phi}_{\Omega}
               \ip{\Qop_{t'}^*g,\Phi}_{\Omega} \bigr]
   = \ip{\Qop_t^*f,\Qop_{t'}^*g}_{\Omega}
   = \ip{f,\Qop_t\Qop_{t'}^*g}_{\Omega}.
\]
According to Proposition~\ref{prop:uni},
$\Qop_t\Qop_{t'}^* = [-\Delta_{t\wedge t'}]^{-1}$. Taking $t=t'$ we
see that $\Psi_t$ satisfies the correlations condition \eqref{eq:GFFL2} 
on $\Omega(t)$.
Moreover, it follows easily that the increments of $\Psi_t$ are
uncorrelated and hence, being jointly Gaussian, independent.
\end{proof}

Since $[-\Delta_t]^{-1} = \Gop_t$ (see Sec.~\ref{sec:Green}), the
covariance structure of the process $\Psi_t$ can be written in the
form
\begin{equation} \label{eq:energy} \begin{split}
   \Ex \bigl[ \ip{f,\Psi_t}_\Omega \ip{g,\Psi_{t'}}_\Omega \bigr]
   &= \int_\Omega\int_\Omega G_{t\wedge t'}(z,w)f(z)g(w)\,\dA(z)\dA(w) \\
   &= \int_\Omega \nabla\Gop_{t\wedge t'}f \cdot
                \nabla\Gop_{t\wedge t'}g\,\dA.
\end{split} \end{equation}
In particular, the variance of $\ip{f,\Psi_t}_\Omega$ equals the
energy integral $\int_\Omega \abs{\nabla\Gop_t f}^2\,\dA$.

In view of the definition of the Hadamard operator, we see from
Theorem~\ref{thm:GFF} that
the Gaussian free field $\mathrm{GFF}_0(\Omega(t))$ can be thought of
as obtained by integrating up (in terms of the area integral) the
harmonic fields induced by Poisson extensions of point oscillations at
each point $\zeta \in \partial\Omega(\tau)$ with $0 < \tau < t$.
Moreover, for $0 < t < t' \leq 1$ the $\mathrm{GFF}_0$ field on
$\Omega(t')$ can be decomposed into independent fields:
\begin{equation}
   \Psi_{t'} = \Psi_t \boxplus (\Psi_{t'} - \Psi_t)
   = \Qop_t \Phi \boxplus (\Qop_{t'}-\Qop_t)\Phi.
\label{eq-sum:Markov}
\end{equation}
Here $\Psi_t$ is a $\mathrm{GFF}_0$ on the smaller domain
$\Omega(t)$ with zero continuation to
$\Omega(t') \sm \cl[\Omega(t)]$, and
$\Psi_{t'}-\Psi_t$ is a field which is harmonic in
$\Omega(t)$ and coincides with $\Psi_{t'}$ on
$\Omega(t') \sm \cl[\Omega(t)]$ (see Lemma~\ref{le:Qop}).
The decomposition \eqref{eq-sum:Markov} expresses the 
Markov property of the Gaussian free field (cf.~\cite{Sh}).

\subsection{Time-derivative of $\Psi_t$}

Looking the increments at the infinitesimal scale, we may examine the
time-derivative of the process $\Psi_t$. To this end, it is convenient 
to use the \emph{harmonic sweep} operator $\Pop_t^*$ given by
\[
   \Pop_t^*f(\zeta) := \int_{\Omega(t)} P_t(z,\zeta)f(z)\,\dA(z),
   \qquad \zeta \in \partial\Omega(t).
\]
It is the adjoint of the Poisson extension operator. We see from the
definition of $\Qop_t^*$ that $\Qop_t^*f(\zeta) =
\Pop_\tau^*f(\zeta)$ provided that $\zeta \in \partial\Omega(\tau)$ with
$0 < \tau < t$. Consequently, for all $0 < t,t' \leq 1$ and
$f,g \in L^2(\Omega)$, we have, in view of \eqref{eq:polcoord}, that
\[
   \Ex\bigl[ \ip{f,\Psi_t}_{\Omega}
             \ip{g,\Psi_{t'}}_{\Omega} \bigr]
   = \ip{\Qop_t^*f,\Qop_{t'}^*g}_{\Omega}
   = \int_0^{t\wedge t'} \biggl\{ \int_{\partial\Omega(\tau)}
     \Pop_\tau^*f\Pop_\tau^*g\,\varrho\,\ds \biggr\} \,\diff\tau.
\]
Writing $\dottt{\Psi}_t$ for the time-derivative of $\Psi_t$ in the sense
of distribution theory, we 
get the expression
\begin{equation}
   \Ex\bigl[ \ip{f,\dottt{\Psi}_t}_{\Omega}
             \ip{g,\dottt{\Psi}_{t'}}_{\Omega} \bigr]
   = \delta_0(t-t')
     \int_{\partial\Omega(t)} \Pop_t^*f\,\Pop_t^*g\,\varrho\,\ds.
\label{eq-5corr}
\end{equation}
Next, suppose that $\Xi_t$ is a weighted white noise process on
$\partial\Omega(t)$ with weight $\varrho^{1/2}$, i.e.,
a Gaussian random field acting on $L^2(\partial\Omega(t))$ with 
correlation structure
\begin{equation}
    \Ex\bigl[ \ip{\phi,\Xi_t}_{\partial\Omega(t)}
       \ip{\psi,\Xi_t}_{\partial\Omega(t)} \bigr]
       = \int_{\partial\Omega(t)} \phi\psi \varrho \,\ds
\label{eq-5corr2}
\end{equation}
for $\phi,\psi \in L^2(\partial\Omega(t))$. Let $\Pop_t\Xi_t$
be the Poisson extension of $\Xi_t$, i.e., the harmonic field on
$\Omega(t)$ given by $\ip{f,\Pop_t\Xi_t}_{\Omega(t)} =
\ip{\Pop_t^*f,\Xi_t}_{\partial\Omega(t)}$ for $f \in L^2(\Omega(t))$;
in view of \eqref{eq-5corr2}, it has the correlation structure
\begin{equation}
    \Ex\bigl[ \ip{f,\Pop_t\Xi_t}_{\Omega(t)}
       \ip{g,\Pop_t\Xi_t}_{\Omega(t)} \bigr]
       = \int_{\partial\Omega(t)} \Pop^*_tf\,\Pop^*_tg\, \varrho \,\ds,
\label{eq-5corr3}
\end{equation}
which we compare with \eqref{eq-5corr}.
We arrive at the following conclusions, whose detailed verification is 
left to the reader:

\begin{prop} \label{prop:GFFder}
Suppose that $\Psi_t$ $(0 < t \leq 1)$ are given by
Theorem~\ref{thm:GFF}. Then the following hold:

\smallskip

(\textit{a})
For a fixed $f$, we have
\[
   \ip{f,\Psi_t}_{\Omega}
   \overset{d}{=} \int_0^t \sqrt{\kappa(\tau)} \,dB(\tau)
   \overset{d}{=} B\Bigl(\int_0^t \kappa(\tau)\,d\tau\Bigr),
   \qquad 0 \leq t \leq 1,
\]
where $B$ is a standard Brownian motion with $B(0) = 0$ and
$\kappa(\tau) := \int_{\partial\Omega(\tau)}
\abs{\Pop_\tau^*f}^2\varrho\,\ds$. Here, ``$\overset{d}{=}$'' indicates
that the processes have the same law (i.e.,\ are versions of each other).
\smallskip

(\textit{b})
The covariance structure of the differentiated process 
$\dottt{\Psi}_t$ is induced by the process $\Pop_t\Xi_t$:
\[
\Ex\big[\ip{f,\dottt{\Psi}_t}_{\Omega}\ip{g,\dottt{\Psi}_{t'}}_{\Omega}\big]
=\delta_0(t-t')\,\Ex\big[\ip{f,\Pop_t\Xi_t}_{\Omega(t)}\ip{g,\Pop_t\Xi_t}
_{\Omega(t)}\big].
\]
This identity should be understood in the sense of distribution theory.
\end{prop}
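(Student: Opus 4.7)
The plan is to reduce both parts to the explicit covariance identity displayed right above \eqref{eq-5corr}, namely
\[
   \Ex\bigl[\ip{f,\Psi_t}_\Omega\ip{g,\Psi_{t'}}_\Omega\bigr]
   = \int_0^{t\wedge t'} h(\tau)\,\diff\tau,
   \qquad h(\tau) := \int_{\partial\Omega(\tau)} \Pop_\tau^*f\cdot\Pop_\tau^*g\cdot\varrho\,\ds,
\]
combined with the independent-increments statement from Theorem~\ref{thm:GFF}. In particular, taking $g=f$ and $t=t'$ yields $\mathrm{Var}\,\ip{f,\Psi_t}_\Omega = V(t) := \int_0^t \kappa(\tau)\,\diff\tau$.

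For part~(a), I fix $f$ and set $X_t := \ip{f,\Psi_t}_\Omega$. By Theorem~\ref{thm:GFF} the family $\{X_t\}$ is a centred Gaussian process with independent increments, hence its law is determined by the covariance $\Ex[X_s X_t] = V(s\wedge t)$. I now invoke the fact that the Wiener integral $\int_0^t\sqrt{\kappa(\tau)}\,\diff B(\tau)$ and the time-changed Brownian motion $B(V(t))$ are likewise centred Gaussian processes with exactly the same covariance $V(s\wedge t)$; by uniqueness of Gaussian laws the three processes then coincide in distribution. The only bookkeeping items are that $V$ is continuous on $[0,1]$ and that the Wiener integral is well-defined, both of which follow from the integrability bound $\int_0^1\kappa(\tau)\,\diff\tau = \norm{\Qop_1^*f}_{L^2(\Omega)}^2 < \infty$ furnished by Proposition~\ref{prop:uni}.

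For part~(b), I differentiate the covariance $C(t,t') := \int_0^{t\wedge t'} h(\tau)\,\diff\tau$ in $t$ and $t'$ in the distributional sense. Since $\partial_t C(t,t') = h(t)\,\mathbf{1}_{\{t<t'\}}$ in the classical sense, a further differentiation in $t'$ produces the Dirac mass $\delta_0(t-t')\,h(t)$. Because $\dottt{\Psi}_t$ is defined by differentiating $\Psi_t$ distributionally in $t$, this mixed partial is exactly the bivariate correlation $\Ex[\ip{f,\dottt\Psi_t}_\Omega\ip{g,\dottt\Psi_{t'}}_\Omega]$, recovering \eqref{eq-5corr}. A line-by-line comparison with \eqref{eq-5corr3} then rewrites the right-hand side as the covariance of $\Pop_t\Xi_t$, which is the desired identification.

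The only real delicacy — and thus the main obstacle — lies in the distributional reading of~(b): both $\dottt{\Psi}_t$ and the factor $\delta_0(t-t')$ are meaningful only when smeared against a test function on $(0,1)^2$, so the asserted equality is really an identity of bilinear forms on $C_0^\infty((0,1)^2)$. Once this interpretation is fixed, the mixed-partial computation sketched above is elementary, and the rest of the verification amounts to matching the two displayed formulas \eqref{eq-5corr} and \eqref{eq-5corr3}.
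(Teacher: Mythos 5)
Your proposal is correct and takes essentially the same route as the paper, which establishes the covariance identity $\Ex\bigl[\ip{f,\Psi_t}_{\Omega}\ip{g,\Psi_{t'}}_{\Omega}\bigr]=\int_0^{t\wedge t'}\bigl\{\int_{\partial\Omega(\tau)}\Pop_\tau^*f\,\Pop_\tau^*g\,\varrho\,\ds\bigr\}\,\diff\tau$ and then passes to \eqref{eq-5corr} by distributional differentiation and matches it against \eqref{eq-5corr3}, with the detailed verification left to the reader. You have merely filled in those omitted details correctly -- that centred Gaussian processes are determined in law by their covariances, that $\int_0^1\kappa(\tau)\,\diff\tau=\norm{\Qop_1^*f}_{L^2(\Omega)}^2<\infty$ guarantees the Wiener integral and the time change are well defined, and that the mixed distributional derivative of $\int_0^{t\wedge t'}h(\tau)\,\diff\tau$ is $\delta_0(t-t')\,h(t)$ as a bilinear form on $C_0^\infty((0,1)^2)$.
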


Thus, in a sense, we are dealing with a time-changed Brownian motion in
the time variable $t$. On the other hand, the individual independent
increments (at infinitesimal scale) correspond to the harmonic extension to 
$\Omega(t)$ of weighted white noise fields along the boundaries 
$\partial\Omega(t)$.

\begin{rem}
The independent decomposition of the Gaussian free field given by
Proposition~\ref{thm:GFF}
-- and the continuous limit of that decomposition alluded to above --
are in many ways analogous to the Brownian exploration of the Gaussian
free along a space-filling curve, as in \cite{Sh}.
\end{rem}

\subsection{Boundary averages}

It is rather well known (see e.g.\ \cite{Sh,DS}) that the averages
of the Gaussian free field on concentric circles follow a time-changed
Brownian motion. We want to formulate an analogue of this fact for the
averages of $\Psi_1$ on the associated flow of the boundaries
$\partial\Omega(t)$.

We first note that Lemma~\ref{le:Qop} implies that for
$f \in L^2(\Omega)$ and $0 < t < 1$, we have
$\Pop_t \Qop_1 f = (\Qop_1 - \Qop_t)f$ on $\Omega(t)$.
Passing to the adjoints, we get
$\Qop_1^* \Pop_t^* f = (\Qop_1^* - \Qop_t^*)f$ for functions $f$
on $\Omega(t)$. Thus we may evaluate $\Psi_1$ with the
distribution $\Pop_t^* f \,\ds$ as a ``test function'' to obtain
\[
   \int_{\partial\Omega(t)} \Psi_1 \Pop_t^*f \,\ds
   = \ip{\Pop_t^*f, \Psi_1}_{\partial\Omega(t)}
   = \ip{\Qop_1^*\Pop_t^* f, \Phi}_{\Omega}
   = \ip{(\Qop_1^*-\Qop_t^*)f,\Phi}_{\Omega}
   = \ip{f,\Psi_1-\Psi_t}_{\Omega}.
\]
More generally, this reasoning can be extended to the case where $f$
is replaced by a distribution whose support is contained in
$\Omega(t)$. Then the following result is obtained.

\begin{cor}
Let $X_t(f) := \int_{\partial\Omega(t)} \Psi_1 \Pop_t^*f\,\ds$
for $0 < t < 1$ and distributions $f$ supported on the skeleton
$\bigcap_t\Omega(t)$. Then, for all such distributions $f$ and $g$,
and $0 < t,t' < 1$, we have
\[
   \Ex \bigl[X_t(f)X_{t'}(g)\bigr]
   = \int_{t\vee t'}^1 \biggl\{ \int_{\partial\Omega(\tau)}
     \Pop_\tau^*f \,\Pop_\tau^*g \,\varrho\,\ds \biggr\} \,\diff\tau,
\]
where $t \vee t'$ is the maximum of $t$ and~$t'$. In particular,
$X_t(f) \overset{d}{=} B\bigl(\int_t^1 \kappa(\tau)\,d\tau\bigr)$
with $\kappa(\tau)$ defined as in Proposition~\ref{prop:GFFder}.
\label{cor-5.5}
\end{cor}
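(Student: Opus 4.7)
The plan is to leverage the identity $X_t(f) = \ip{f,\Psi_1-\Psi_t}_{\Omega}$ derived just before the statement, together with Theorem~\ref{thm:GFF} and the pointwise description of $\Qop_t^*$. As a preliminary step, I must check that this identity extends to distributions $f$ supported on the skeleton: for $\zeta\in\partial\Omega(t)$ with $0<t<1$ the kernel $z\mapsto P_t(z,\zeta)$ is smooth on a neighbourhood of the skeleton, so $\Pop_t^*f(\zeta) = \ip{f,P_t(\cdot,\zeta)}$ is a well-defined smooth function of $\zeta$, and the chain
\[
   X_t(f)
   = \ip{\Pop_t^*f,\Psi_1}_{\partial\Omega(t)}
   = \ip{\Qop_1^*\Pop_t^*f,\Phi}_{\Omega}
   = \ip{(\Qop_1^*-\Qop_t^*)f,\Phi}_{\Omega}
\]
goes through verbatim. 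I expect this distributional extension to be the step requiring the most care, but it is essentially a duality check, since $\operatorname{supp} f$ is at a positive distance from every relevant $\partial\Omega(t)$.

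Once this is in place, the white-noise correlation axiom gives
\[
   \Ex[X_t(f)X_{t'}(g)]
   = \ip{(\Qop_1^*-\Qop_t^*)f,(\Qop_1^*-\Qop_{t'}^*)g}_{\Omega}.
\]
I will then use the pointwise description $\Qop_s^*h(\zeta) = \Pop_{\tau(\zeta)}^*h(\zeta)$ for $\zeta\in\Omega(s)$ and $\Qop_s^*h(\zeta) = 0$ otherwise, recorded at the beginning of the subsection on the time-derivative, to obtain $(\Qop_1^*-\Qop_s^*)h(\zeta) = 1_{\Omega\sm\Omega(s)}(\zeta)\,\Pop_{\tau(\zeta)}^*h(\zeta)$. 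Consequently, the product of the two factors above is supported on $\Omega\sm\Omega(t\vee t')$, and the polar coordinates identity \eqref{eq:polcoord}, restricted to $\tau\in(t\vee t',1)$, converts the area integral into precisely the claimed iterated integral.

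For the distributional equality $X_t(f)\overset{d}{=}B\bigl(\int_t^1\kappa(\tau)\,d\tau\bigr)$, taking $f=g$ and $t=t'$ in the covariance gives $\operatorname{Var}(X_t(f)) = \int_t^1 \kappa(\tau)\,d\tau$, while for $0<t<t'<1$ the same formula yields $\Ex[X_{t'}(f)(X_t(f)-X_{t'}(f))] = 0$. Since the joint law is Gaussian, the reverse-time increments are therefore independent with the correct variances, which characterizes the process in law as the stated time-changed Brownian motion.
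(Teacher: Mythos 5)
Your proposal is correct and follows essentially the paper's intended route: the identity $X_t(f)=\ip{f,\Psi_1-\Psi_t}_{\Omega}=\ip{(\Qop_1^*-\Qop_t^*)f,\Phi}_{\Omega}$ derived just before the corollary (extended, as you check, to distributions supported on the skeleton), the white-noise isometry, and the polar-coordinates formula \eqref{eq:polcoord}. Your direct computation of $\ip{(\Qop_1^*-\Qop_t^*)f,(\Qop_1^*-\Qop_{t'}^*)g}_{\Omega}$ via the support observation is merely a one-step version of expanding the four covariance terms with the paper's formula $\Ex\bigl[\ip{f,\Psi_t}_{\Omega}\ip{g,\Psi_{t'}}_{\Omega}\bigr]=\int_0^{t\wedge t'}\bigl\{\int_{\partial\Omega(\tau)}\Pop_\tau^*f\,\Pop_\tau^*g\,\varrho\,\ds\bigr\}\,\diff\tau$, and your Gaussian characterization of the time-changed Brownian motion is standard, so the two arguments coincide in substance.
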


Thus, the process $X_t(f)$ ($0<t<1$) -- the values of which are the
integrals along the curves $\partial\Omega(t)$ of the field 
$\Psi_1\in \mathrm{GFF}_0(\Omega)$ with respect to the (signed) measures
$\Pop_t^*f\,\ds$ -- obeys the law of a time-reversed Brownian motion
running at a variable speed.

For instance, if $\Omega(t)$ is the open disc of radius $t$ centred
at the origin and $f = \delta_0$, the unit point mass at $0$, we have 
$\varrho \equiv 1$ and $\Pop_t^*\delta_0 \equiv (2\pi t)^{-1}$. Then
$\kappa(t) = (2\pi t)^{-1}$, and we obtain the well-known
result that the average of $\mathrm{GFF}_0$ on concentric
circles of radii $e^{-2\pi t}$ has exactly the
law of a Brownian motion.

\begin{rem}
It is possible to supply an expression of type \eqref{eq:energy} for the
covariance $\Ex[X_t(f)X_{t'}(g)]$ altough it is slightly more delicate.
We first observe that the Green potential $\Gop_1 f$ is well defined
(as a distribution) for distributions $f$ supported in the skeleton, and
that $\Gop_1 f$ is harmonic in $\Omega$ off the skeleton and vanishes on 
$\partial\Omega$. We let $\tilde{\Gop}_t f$ denote the function which equals
$\Gop_1 f$ on $\Omega\sm\Omega(t)$, and is harmonically extended
from its boundary values to $\Omega(t)$. Then
\[
  \Ex\bigl[X_t(f)X_{t'}(g)\bigr]
  = \int_\Omega \nabla\tilde{\Gop}_{t\vee t'}f \cdot
    \nabla\tilde{\Gop}_{t\vee t'}g \,\dA.
\]
\end{rem}

\begin{rem}
(a) As we have seen, the $\mathrm{GFF}_0$ field can be derived from
the white noise field by applying the inverse of the square root of
minus the Laplacian to $\mathrm{WN}$. The conformal invariance of the
Laplacian leads to the corresponding conformal invariance of
$\mathrm{GFF}_0$. If we replace the Laplacian by
$\operatorname{div}\frac{1}{\omega}\nabla$ (which appears, e.g., in
Calder\'on's inverse conductivity problem), the process retains the
conformal invariance structure if we allow the weight $\omega$ to
change correspondingly. Here, the weight function $\omega$ may be
assumed positive and smooth. It should be possible to extend the use
the Hadamard variation technique to this more general setting. 
We might also want to consider in the complex-valued setting the
operators introduced by Garabedian~\cite{Gar}. 

(b) Hadamard's variation formula also applies in the setting of the 
biharmonic equation with vanishing Dirichlet data (see \cite{H} for
the original result, and \cite{Hed,HJS} for the integral version).
This should lead to a corresponding Hadamard operator in this setting,
and that should have applications to the associated random fields.  
\end{rem}

\subsection*{Acknowledgements}
The authors thank Michael Benedicks and Kalle Kyt\"ol\"a for their
interest in this work, and Boualem Djehiche for help with references.
The first author also thanks Stanislav Smirnov for the inspiring conference
``Conformal maps from probability to physics'' at Monte Verit\`a,
Ascona, in 2010, and Pavel Wiegmann for several interesting
conversations. The second author acknowledges the hospitality of
the Department of Mathematics at the Royal Institute of Technology
in Stockholm during his visit in 2011.

\end{document}